\documentclass[12pt]{amsart}
\usepackage{amsthm}
\usepackage{amssymb}
\usepackage{cite}
\usepackage{cases}
\usepackage{longtable}
\usepackage{geometry}
\geometry{verbose,tmargin=0.8in,bmargin=0.8in,lmargin=1in,rmargin=1in}
\usepackage{enumerate}
\usepackage{setspace}
\usepackage{tikz-cd}
\usetikzlibrary{matrix, arrows}
\usepackage[unicode=true]
 {hyperref}
\hypersetup{
colorlinks=true,
urlcolor=blue,
citecolor=blue,
linkcolor=blue,
}

\onehalfspacing
\allowdisplaybreaks
\setcounter{tocdepth}{2}

\newtheorem{thm}{Theorem}[section]
\newtheorem{prop}[thm]{Proposition}

\theoremstyle{definition}
\newtheorem{definition}[thm]{Definition}
\newtheorem{notation}[thm]{Notation}
\newtheorem{example}[thm]{Example}

\theoremstyle{remark}
\newtheorem{remark}[thm]{Remark}

\numberwithin{equation}{section}

\newcommand{\bZ}{\mathbb{Z}}

\newcommand{\GL}{\mathrm{GL}}
\newcommand{\Aut}{\mathrm{Aut}}
\newcommand{\op}{\mbox{\tiny op}}
\newcommand{\reg}{{\mbox{\tiny reg}}}

\makeatletter
\newcommand{\addresseshere}{%
  \enddoc@text\let\enddoc@text\relax
}
\makeatother

\begin{document}

\large 

\title{Representation theory of skew braces}
\author{Yuta Kozakai}
\address[Y. Kozakai]{Department of Mathematics\\
Tokyo University of Science\\
1-3 Kagurazaka, Shinjuku-ku\\
Tokyo\\
Japan 
}
\email{kozakai@rs.tus.ac.jp}
\author{Cindy (Sin Yi) Tsang}
\address[C. Tsang]{Department of Mathematics\\
Ochanomizu University\\
2-1-1 Otsuka, Bunkyo-ku\\
Tokyo\\
Japan}
\email{tsang.sin.yi@ocha.ac.jp}\urladdr{http://sites.google.com/site/cindysinyitsang/} 
\date{Last updated: \today}

\maketitle


\begin{abstract}According to Letourmy and Vendramin, a representation of a skew brace is a pair of representations on the same vector space, one for the additive group and the other for the multiplicative group, that satisfies a certain compatibility condition. Following their definition, we shall explain how some of the results from representation theory of groups, such as Maschke's theorem and Clifford's theorem, extend naturally to that of skew braces. We shall also give some concrete examples to illustrate that skew brace representations are more difficult to classify than group representations.
\end{abstract}

\tableofcontents


\section{Introduction}

In the study of set-theoretic solutions to the Yang-Baxter equation, Rump \cite{Rump} introduced an algebraic object called \textit{brace} and showed that braces correspond to the non-degenerate involutive solutions. As a generalization of this, Guarnieri and Vendramin \cite{GV} later defined \textit{skew brace} and showed that skew braces correspond to all non-degenerate solutions. One can then understand the non-degenerate solutions by studying skew braces. For example, by \cite{ann} it is known that whether a solution is multipermutation is related to the right nilpotency of the associated skew brace.

\vspace{3mm}

Let us recall the definition of brace and skew brace.

\begin{definition} A \emph{skew (left) brace} is a set $A=(A,\cdot,\circ)$ equipped with two group  operations $\cdot$ and $\circ$ such that the so-called brace relation
\[a\circ (b\cdot c) = (a\circ b)\cdot a^{-1} \cdot (a\circ c)\]
holds for all $a,b,c\in A$. For each $a\in A$, we shall write $a^{-1}$ for its inverse in $(A,\cdot)$ and $\overline{a}$ for its inverse in $(A,\circ)$. The groups $(A,\cdot)$ and $(A,\circ)$ are referred to as the \textit{additive group} and \textit{multiplicative group} of $A$, respectively. It is easy to see that $(A,\cdot)$ and $(A,\circ)$ share the same identity that we denote by $1$. A \textit{brace} is a skew brace with an abelian additive group. 
\end{definition}

Skew brace is a ring-like structure where the brace relation resembles the distributive law. In fact, brace was introduced as a generalization of radical rings \cite{Rump}. At the same time, skew brace may be regarded as an extension of groups by the following example.

\begin{example}Let $(A,\cdot)$ be any group. Then $(A,\cdot,\cdot)$ and $(A,\cdot,\cdot^{\op})$ are both skew braces, where $\cdot^{\op}$ is the opposite operation of $\cdot$ defined by $a\cdot^{\op} b = b\cdot a$ for all $a,b\in A$. Skew braces that arise in this way, respectively, are said to be \textit{trivial} and \textit{almost trivial} because they are essentially just groups.
\end{example}

It then seems natural to ask to what extent can concepts and results from group theory be generalized to skew braces. For example, identifying groups with almost trivial skew braces, one can view the binary operation
\[ a*b = a^{-1}\cdot (a\circ b) \cdot b^{-1}\]
in skew braces as an analog of the commutator in groups. There are natural analogs of center (e.g. socle and annihilator) and nilpotency (e.g. left, right, and strong nilpotency) for skew braces defined in terms of this operation $*$. Factorizations of skew braces were considered in \cite{factor, Ito} and there is an analog of It\^{o}'s theorem for skew braces. Isoclinism of skew braces was defined in \cite{isoclinism} and it was shown that every skew brace is isoclinic to a so-called stem skew brace. The theory of Schur covers for finite skew braces was developed in \cite{Schur cover} and it was shown that the Schur cover of a finite skew brace is unique up to isoclinism. As one can see from these examples, there are many similarities between groups and skew braces. There is no doubt that the integrations of \par\noindent techniques from group theory can help us better understand skew braces. 

\vspace{3mm}

In relation to Schur covers, the notion of (complex projective) representations of a skew brace was also introduced in \cite{Schur cover} (whose definition was based on \cite{Zhu}), but not much was said other than the definition. Inspired by \cite{Schur cover}, we decided to study skew brace representations further, and this is the purpose of our paper. In \S \ref{sec:repn1} and \S \ref{sec:repn2}, we describe some ways to construct skew brace representations from group representations, and explain how some of the results from representation theory of groups naturally extend to that of skew braces. In \S \ref{sec:ex}, we give some concrete examples to illustrate that skew brace representations are more difficult to classify than group representations.

\begin{notation} Throughout the paper, let $A = (A,\cdot,\circ)$ be a skew brace and let $k$ be a field. We shall also define
\[ \lambda_a,\, \lambda_a^{\op}  : A \longrightarrow A;\,\  \begin{cases}
 \lambda_a(b) = a^{-1}\cdot ( a\circ b)\\
 \lambda^{\op}_a(b) = (a\circ b)\cdot a^{-1}
\end{cases}\]
for each $a\in A$. They are automorphisms of $(A,\cdot)$ by the brace relation, and it is well-known that the maps
\[ \lambda,\, \lambda^{\op} : (A,\circ) \longrightarrow \Aut(A,\cdot); \,\
 \lambda(a)= \lambda_a,\,\
  \lambda^{\op}(a) = \lambda_a^{\op}\]
are group homomorphisms. Sometimes $\lambda$ is called the \emph{lambda map} of $A$. We note that $\lambda^{\op}$ is the lambda map of the skew brace $A^{\op} = (A,\cdot^{\op},\circ)$, which is also called the \emph{opposite skew brace} of $A$ as defined by \cite{op}. We shall write
\[ \Lambda_A = (A,\cdot)\rtimes_{\lambda^{\op}}(A,\circ)\]
for the natural outer semidirect product of $(A,\cdot)$ and $(A,\circ)$ given by $\lambda^{\op}$.
\end{notation}

Let us also recall the definition of ideal in the context of skew braces. It is the substructure that one needs in order to form skew brace quotients.

\begin{definition}A subset $I$ of $A$ is called an \emph{ideal} if it is a normal subgroup of both $(A,\cdot)$ and $(A,\circ)$, and $\lambda_a(I)\subseteq I$ for all $a\in A$. In this case, it is clear that $I = (I,\cdot,\circ)$ is a skew brace. We also have $a\cdot I = a\circ I$ for all $a\in A$, so one can naturally form a quotient skew brace $A/I$.
\end{definition}

\section{Linear algebraic perspective}\label{sec:repn1}

We first recall the definition of representation of a skew brace that is due to \cite[Remark 3.3]{Zhu} and \cite[Definition 4.1]{Schur cover}.


\begin{definition} A \emph{representation} of $A$ over $k$ is a triple $(V,\beta,\rho)$, where
\begin{enumerate}[(1)]
\item $V$ is a vector space over $k$;
\item $\beta : (A,\cdot)\longrightarrow \GL(V)$ is a representation of $(A,\cdot)$;
\item $\rho: (A,\circ)\longrightarrow \GL(V)$ is a representation of $(A,\circ)$;
\end{enumerate}
such that the relation
\begin{equation}\label{relation}
\beta(\lambda^{\op}_a(b)) = \rho(a)\beta(b)\rho(a)^{-1}
\end{equation}
holds for all $a,b\in A$. 
\end{definition}

We shall omit the phrase ``over $k$" for simplicity. All vector spaces, representations, etc. to be considered are over $k$.

\begin{remark}\label{key remark} The relation (\ref{relation}) is meant to mimic the conjugation action of the multiplicative group $(A,\circ)$ on the additive group $(A,\cdot)$ in the semidirect product $\Lambda_A$. This observation implies that given a representation $(V,\beta,\rho)$ of the skew brace $A$, we can naturally construct a representation $(V,\varphi_{(\beta,\rho)})$ of the group $\Lambda_A$ by defining
\[ \varphi_{(\beta,\rho)} : \Lambda_A \longrightarrow \GL(V);\,\ \varphi_{(\beta,\rho)}(a,b) = \beta(a)\rho(b).\]
We are grateful to Letourmy and Vendramin for pointing this out to us. In this way, a representation of the skew brace $A$ can be regarded as a representation of the group $\Lambda_A$.
\end{remark}

In the following examples, we shall describe a few ways to construct skew brace representations out of a group representation. The first methods (Examples \ref{ex:circ} and \ref{ex:cdot}) are to build a representation of $A$ from a representation of either $(A,\cdot)$ or $(A,\circ)$ by letting the other group act trivially.

\begin{example}\label{ex:circ} Let $(V,\rho)$ be any representation of the multiplicative group $(A,\circ)$. Then $(V,\beta_0,\rho)$ is a representation of $A$, where $\beta_0:(A,\cdot)\longrightarrow \GL(V)$ is the trivial representation on $V$.
\end{example}

\begin{example}\label{ex:cdot}Let $(V,\beta)$ be any representation of the additive group $(A,\cdot)$ such that $\beta(\lambda_a^{\op}(b)) = \beta(b)$ for all $a,b\in A$. Then $(V,\beta,\rho_0)$ is a representation of $A$, where $\rho_0:(A,\circ)\longrightarrow \GL(V)$ is the trivial representation on $V$.
\end{example}

In general, given a representation $(V,\beta)$ of the additive group $(A,\cdot)$, there need not exist $\rho:(A,\circ)\longrightarrow \GL(V)$ for which $(V,\beta,\rho)$ is a representation of $A$. For example, take $\dim_k(V)$ to be finite and let $\chi_\beta : (A,\cdot) \longrightarrow k$ denote the character of $\beta$. For such a $\rho$ to exist, by (\ref{relation}) we must have
\begin{equation}\label{chi}
\chi_\beta(\lambda_a^{\op}(b)) = \chi_\beta(b)
\end{equation}
for all $a,b\in A$. As the next example shows, this can fail to hold.

\begin{example}\label{ex:beta}
Consider the brace $(\mathbb{Z}/p\mathbb{Z}\times \mathbb{Z}/2\mathbb{Z},+,\circ)$, where
\[ \begin{pmatrix}a_1\\a_2\end{pmatrix} \circ \begin{pmatrix} b_1\\ b_2\end{pmatrix} = 
\begin{pmatrix}
a_1 + (-1)^{a_2}b_1\\ a_2+b_2\end{pmatrix}\]
for all $\left(\begin{smallmatrix}a_1\\a_2\end{smallmatrix}\right),\left(\begin{smallmatrix}b_1\\b_2\end{smallmatrix}\right)\in \bZ/p\bZ\times \bZ/2\bZ$, and $p$ is an odd prime (c.f. \cite[Theorem]{pq}). Suppose that the field $k$ contains a primitive $p$th root $\zeta_p$ of unity. Then for the representation $\beta : (\bZ/p\bZ\times \bZ/2\bZ,+)\longrightarrow \GL(k) \simeq k^\times$ defined by
\[ \beta\begin{pmatrix} 1\\0\end{pmatrix}= \zeta_p,
\,\ \beta\begin{pmatrix} 0\\1\end{pmatrix}=1,\]
we have $\chi_\beta(\lambda_{a}^{\op}(b))  = \zeta_p^{(-1)^{a_2}b_1}$ and $\chi_\beta(b) = \zeta_p^{b_1}$ for $a= \left(\begin{smallmatrix}a_1\\a_2\end{smallmatrix}\right),\, b=\left(\begin{smallmatrix}b_1\\b_2\end{smallmatrix}\right)$. Since they are not equal for $a_2=1,\, b_1=1$, we see that (\ref{chi}) is not satisfied.
\end{example}

Let $A*A$ be the subgroup of $(A,\cdot)$ generated by $\{a*b: a,b\in A\}$. Then $A*A$ is an ideal of $A$ (see \cite[Proposition 2.1]{perfect}); in fact the smallest ideal for which the quotient skew brace is trivial. The second methods (Examples \ref{ex:lift} and \ref{ex:lift'}) are to build a representation of $A$ by lifting a representation of the group
\[ A/A*A = (A/A*A,\cdot) = (A/A*A,\circ)\]
via the natural quotient map
 \[\pi_A : A \longrightarrow A/A*A;\,\ \pi_A(a) = a\cdot (A*A) = a\circ (A*A).\]
Note that $\pi_A$ is a group homomorphism on both $(A,\cdot)$ and $(A,\circ)$.

\begin{example}\label{ex:lift} Let $(V,\overline{\alpha})$ be any representation of the group $A/A*A$. Note that $(V,\alpha)$ is a representation of both $(A,\cdot)$ and $(A,\circ)$ for $\alpha = \overline{\alpha}\circ \pi_A$, and
\begin{align*}
\alpha(\lambda_a^{\op}(b)) & = \alpha((a\circ b )\cdot a^{-1})\\
& = \alpha( a\cdot b \cdot a^{-1}) \\
& = \alpha(a) \alpha(b)\alpha(a)^{-1}
\end{align*}
for all $a,b\in A$ because $\ker(\alpha)$ contains $A*A$. Hence, we see that $(V,\alpha,\alpha)$ satisfies (\ref{relation}) and is a representation of $A$.
\end{example}

\begin{example}\label{ex:lift'}Let $(V,\overline{\alpha})$ be any representation of the group $A/A*A$ such that $\mathrm{Im}(\alpha)$ is abelian. Then $(V,\alpha,\rho_0)$ and $(V,\beta_0,\alpha)$ are representations of $A$ for $\alpha = \overline{\alpha}\circ \pi_A$ by  Examples \ref{ex:cdot} and \ref{ex:circ}, respectively, where  $\rho_0$ and $\beta_0$ are the trivial representations on $V$. Note that
\[\alpha(\lambda_a^{\op}(b)) = \alpha(a) \alpha(b)\alpha(a)^{-1}=\alpha(b)\]
for all $a,b\in A$ because $\ker(\alpha)$ contains $A*A$ and $\mathrm{Im}(A)$ is abelian, whence Example \ref{ex:cdot} applies. It is easy to see that the tensor product 
\[(V\otimes_k V, \alpha\otimes\rho_0, \beta_0\otimes \alpha)\]
satisfies (\ref{relation}) and so is a representation of $A$.
\end{example}

In view of Remark \ref{key remark}, we make the following natural definitions.

\begin{definition}\label{def:X} A representation $(V,\beta,\rho)$ of $A$ is said to \textit{possess property $X$} if the representation $(V,\varphi_{(\beta,\rho)})$ of $\Lambda_A$ possesses property $X$, where $X$ is any property defined for a group representation, such as \textit{irreducible}, \textit{indecomposable}, and \textit{completely reducible}.
\end{definition}

\begin{definition}\label{def:equivalent} Two representations $(V_1,\beta_1,\rho_1)$ and $(V_2,\beta_2,\rho_2)$ of $A$ are said to be \textit{equivalent} if the representations $(V_1,\varphi_{(\beta_1,\rho_1)})$ and $(V_2,\varphi_{(\beta_2,\rho_2)})$ of $\Lambda_A$ are equivalent.
\end{definition}

Since every representation of $A$ may be viewed as a representation of $\Lambda_A$, certain results from the representation theory of groups naturally extend to that of skew braces. We give a couple examples here.

\begin{prop}\label{fd prop} Suppose that $A$ is finite. Then an irreducible representation of $A$ has degree at most $|A|^2$. 
\end{prop}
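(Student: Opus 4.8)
The plan is to reduce the statement to the corresponding fact about the finite group $\Lambda_A$ and then apply an elementary degree bound that holds over an arbitrary field. By Remark \ref{key remark} and Definition \ref{def:X}, an irreducible representation $(V,\beta,\rho)$ of $A$ is, by definition, a triple for which the associated representation $(V,\varphi_{(\beta,\rho)})$ of the group $\Lambda_A$ is irreducible, and the degree $\dim_k V$ is literally the same object on both sides. Thus it suffices to bound the degree of an irreducible representation of $\Lambda_A$.

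Since $\Lambda_A = (A,\cdot)\rtimes_{\lambda^{\op}}(A,\circ)$ is the semidirect product of two groups each of order $|A|$, we have $|\Lambda_A| = |A|^2$. It therefore remains to prove the general fact that an irreducible representation of a finite group $G$ over any field $k$ has degree at most $|G|$; applying this to $G = \Lambda_A$ yields the claimed bound $|A|^2$.

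The one point I expect to require care is that $k$ is an \emph{arbitrary} field, so one cannot simply invoke the familiar identity $\sum d_i^2 = |G|$, which presupposes that $k[G]$ is split semisimple. Instead I would argue module-theoretically: an irreducible representation of $G$ corresponds to a simple left $k[G]$-module $S$, and every simple module is cyclic, so fixing any nonzero $s\in S$ gives a surjection $k[G]\longrightarrow S$, $x\mapsto x\cdot s$, of $k[G]$-modules. Hence $\dim_k S \le \dim_k k[G] = |G|$. This argument uses neither semisimplicity nor algebraic closedness, so it applies uniformly and, combined with the reduction above, completes the proof.
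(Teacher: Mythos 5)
Your proposal is correct and follows essentially the same route as the paper: reduce to the group $\Lambda_A$ via Remark \ref{key remark} and Definition \ref{def:X}, note $|\Lambda_A| = |A|^2$, and invoke the bound that an irreducible representation of a finite group $G$ has degree at most $|G|$. The only difference is that you supply a proof of this last fact (simple modules are cyclic, hence quotients of $k[G]$), which the paper simply cites as well-known; your argument for it is valid over an arbitrary field, as needed.
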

\begin{proof} It is well-known that the degree of an irreducible representation of a finite group $G$ is at most $|G|$. The claim then follows since $|\Lambda_A| = |A|^2$.
\end{proof}

\begin{thm}[Analog of Maschke's theorem]\label{Maschke thm}
Suppose that $A$ is finite and that $\mathrm{char}(k)$ is either $0$ or coprime to $|A|$. Then for any finite dimensional representation $(V,\beta,\rho)$ of $A$ over $k$, every $\varphi_{(\beta,\rho)}$-invariant subspace of $V$ admits a $\varphi_{(\beta,\rho)}$-invariant complement in $V$. Hence, by induction on the degree, every finite dimensional representation of $A$ over $k$ is completely reducible.
\end{thm}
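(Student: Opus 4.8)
The plan is to transport the statement wholesale to the finite group $\Lambda_A$ and invoke the classical Maschke theorem there. By Remark \ref{key remark}, the datum $(V,\beta,\rho)$ is the same as the group representation $(V,\varphi_{(\beta,\rho)})$ of $\Lambda_A$, and under this dictionary a subspace $W\subseteq V$ is $\varphi_{(\beta,\rho)}$-invariant exactly when it is a subrepresentation of $(V,\varphi_{(\beta,\rho)})$. So the first assertion of the theorem is literally the invariant-complement form of Maschke's theorem for the single group $\Lambda_A$, and nothing skew-brace-specific remains once this translation is made.

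First I would record the arithmetic point that makes the hypothesis on $|A|$ sufficient. Since $|\Lambda_A| = |A|^2$, a prime $p$ divides $|\Lambda_A|$ if and only if it divides $|A|$; hence the assumption that $\mathrm{char}(k)$ is $0$ or coprime to $|A|$ is equivalent to $\mathrm{char}(k)$ being $0$ or coprime to $|\Lambda_A|$. This is exactly the hypothesis needed to apply Maschke's theorem to $\Lambda_A$. Then I would apply the classical result: under this characteristic condition, every $\Lambda_A$-invariant subspace of the finite dimensional representation $(V,\varphi_{(\beta,\rho)})$ admits a $\Lambda_A$-invariant complement. Reading this back through Remark \ref{key remark} gives precisely the desired statement for $(V,\beta,\rho)$. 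If one prefers a self-contained argument, the same conclusion follows by the usual averaging construction: choose any linear projection of $V$ onto the invariant subspace, average its conjugates over $\varphi_{(\beta,\rho)}(\Lambda_A)$, and divide by $|\Lambda_A|$, which is invertible in $k$ by the coprimality; the kernel of the resulting equivariant projection is the sought complement.

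For the final sentence, complete reducibility is immediate once the invariant-complement property is known. By Definition \ref{def:X}, $(V,\beta,\rho)$ is completely reducible precisely when $(V,\varphi_{(\beta,\rho)})$ is, so I would run the standard induction on $\dim_k V$: if $V$ is zero or irreducible there is nothing to prove, and otherwise I would choose a proper nonzero invariant subspace $W$, take an invariant complement $W'$ by the first part, and apply the inductive hypothesis to the strictly smaller representations carried by $W$ and $W'$ to write each as a direct sum of irreducibles, whence $V$ is as well.

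I do not expect any genuine obstacle here: the content of the theorem was already discharged by the identification of skew brace representations with $\Lambda_A$-representations in Remark \ref{key remark}. The only point requiring a moment's attention, and the reason the hypothesis is phrased in terms of $|A|$ rather than $|\Lambda_A|$, is the equivalence of coprimality to $|A|$ and to $|A|^2$.
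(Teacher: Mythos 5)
Your proposal is correct and follows exactly the paper's own argument: identify $(V,\beta,\rho)$ with the $\Lambda_A$-representation $(V,\varphi_{(\beta,\rho)})$ via Remark \ref{key remark}, note that $|\Lambda_A|=|A|^2$ so the characteristic hypothesis transfers, and invoke the classical Maschke theorem. The extra details you supply (the averaging construction and the induction for complete reducibility) are standard and consistent with what the paper leaves implicit.
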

\begin{proof}This follows from the usual Maschke's theorem because $|\Lambda_A| = |A|^2$, and the hypothesis implies that $\mathrm{char}(k)$ is either $0$ or coprime to $|\Lambda_A|$.
\end{proof}

It is well-known that Maschke's theorem admits a converse and it is proven by considering the regular representation.

\vspace{3mm}

Let $kA$
be the vector space over $k$ that has the set $A$ as a basis. Let
\[ \begin{cases}
\beta_\reg : (A,\cdot)\longrightarrow \GL(kA);& \beta_\reg(a)(b) = a\cdot b \mbox{ for all }a,b\in A,\\
\rho_\reg: (A,\circ)\longrightarrow \GL(kA);&\rho_\reg(a)(b) = a\circ b\mbox{ for all }a,b\in A,
\end{cases}\]
respectively, denote the left regular representations of $(A,\cdot)$ and $(A,\circ)$. As we shall verify below, the triplet $(kA,\beta_\reg,\rho_\reg)$ is a representation of $A$, and we shall refer to it as the \emph{left regular representation} of $A$.

\begin{prop}\label{prop:regular}The triplet $(kA,\beta_\reg,\rho_\reg)$ is a representation of $A$.
\end{prop}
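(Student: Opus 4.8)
The plan is to verify the three conditions in the definition of a skew brace representation. Conditions (1)--(3) are essentially automatic: $kA$ is a vector space over $k$ by construction, and $\beta_\reg$ and $\rho_\reg$ are the left regular representations of the groups $(A,\cdot)$ and $(A,\circ)$, which are well known to be genuine group homomorphisms into $\GL(kA)$. So the only substantive task is to check the compatibility relation (\ref{relation}), namely that
\[ \beta_\reg(\lambda_a^{\op}(b)) = \rho_\reg(a)\beta_\reg(b)\rho_\reg(a)^{-1} \]
holds for all $a,b\in A$.

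Since both sides are linear maps on $kA$, it suffices to check that they agree on each basis vector $c\in A$. First I would compute the left-hand side directly from the definitions: $\beta_\reg(\lambda_a^{\op}(b))(c) = \lambda_a^{\op}(b)\cdot c = ((a\circ b)\cdot a^{-1})\cdot c$. For the right-hand side, I would peel off the three factors one at a time, using that $\rho_\reg(a)^{-1} = \rho_\reg(\overline{a})$ because $\rho_\reg$ is a homomorphism from $(A,\circ)$. This gives $\rho_\reg(a)^{-1}(c) = \overline{a}\circ c$, then $\beta_\reg(b)(\overline{a}\circ c) = b\cdot(\overline{a}\circ c)$, and finally $\rho_\reg(a)\big(b\cdot(\overline{a}\circ c)\big) = a\circ\big(b\cdot(\overline{a}\circ c)\big)$.

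The key step, and the only place where anything nontrivial happens, is to simplify $a\circ(b\cdot(\overline{a}\circ c))$ using the brace relation with $b$ and $\overline{a}\circ c$ playing the roles of the two distributed factors. This yields $(a\circ b)\cdot a^{-1}\cdot (a\circ(\overline{a}\circ c))$, and since $(A,\circ)$ is a group we have $a\circ(\overline{a}\circ c) = (a\circ\overline{a})\circ c = c$. Hence the right-hand side collapses to $((a\circ b)\cdot a^{-1})\cdot c$, which agrees with the left-hand side computed above. I do not anticipate any genuine obstacle here; the computation is short once one is careful to keep track of which operation ($\cdot$ or $\circ$) is acting at each stage and to apply the brace relation in the correct direction.
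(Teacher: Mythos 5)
Your proof is correct and follows exactly the paper's argument: evaluate both sides of relation (\ref{relation}) on a basis vector $c\in A$, use $\rho_\reg(a)^{-1}=\rho_\reg(\overline{a})$, and collapse $a\circ(b\cdot(\overline{a}\circ c))$ via the brace relation to $\lambda_a^{\op}(b)\cdot c$. No differences worth noting.
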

\begin{proof}For any $a,b,c\in A$, we have 
\begin{align*}
\rho_\reg(a)\beta_\reg(b)\rho_\reg(a)^{-1}(c) & =\rho_\reg(a)\beta_\reg(b)\rho_\reg(\overline{a})(c)\\
& = a\circ (b\cdot (\overline{a}\circ c))\\
& = (a\circ b)\cdot a^{-1} \cdot (a\circ \overline{a}\circ c)\\
& = \lambda_{a}^{\op}(b)\cdot c\\
& = \beta_\reg({\lambda_a^{\op}(b))}(c).
\end{align*}
This verifies the relation (\ref{relation}) and the claim follows.
\end{proof}

It is natural to ask whether $(kA,\beta'_\reg,\rho'_\reg)$ is a representation of $A$, where
\[ \begin{cases}
\beta'_\reg : (A,\cdot)\longrightarrow \GL(kA);& \beta'_\reg(a)(b) = b\cdot a^{-1} \mbox{ for all }a,b\in A,\\
\rho'_\reg: (A,\circ)\longrightarrow \GL(kA);&\rho'_\reg(a)(b) = b\circ \overline{a}\mbox{ for all }a,b\in A,
\end{cases}\]
respectively, denote the right regular representations of $(A,\cdot)$ and $(A,\circ)$. It follows from (\ref{relation}) that $(kA,\beta'_\reg,\rho'_\reg)$ is a representation of $A$ if and only if
\begin{equation}\label{right regular}
 c\cdot (a\circ b^{-1})\cdot a^{-1}
=((c\circ a)\cdot b^{-1}) \circ \overline{a} \end{equation}
for all $a,b,c\in A$. As the next example shows, this does not hold in general.

\begin{example} Let $(\bZ/p\bZ\times \bZ/2\bZ,+,\circ)$ be the same brace in Example \ref{ex:beta}, where $p$ is an odd prime. Taking $c =a$, we may write (\ref{right regular}) as
\[ (a\circ a) - b = a\circ (-b)\circ a.\]
But letting $a = \left(\begin{smallmatrix} a_1 \\ a_2\end{smallmatrix}\right)$ and $b=\left(\begin{smallmatrix}b_1\\ b_2\end{smallmatrix}\right)$, we have
\begin{align*}
(a\circ a) - b & = \begin{pmatrix} a_1 + (-1)^{a_2}a_1-b_1\\ b_2\end{pmatrix},\\
a\circ (-b)\circ a& = \begin{pmatrix}a_1 + (-1)^{a_2}(-b_1) + (-1)^{a_2+b_2}a_1\\  b_2\end{pmatrix}.
\end{align*}
The first components are not equal when $a_2 = 1,\, b_2=0$, and $b_1\neq 0$.
\end{example}

Using the left regular representation of $A$ (which is not the same thing as the left regular representation of $\Lambda_A$!), we can prove that the analog of the converse of Maschke's theorem for skew brace representations holds.

\begin{thm}[Analog of converse of Maschke's theorem]\label{Maschke'} Suppose that $A$ is finite and that for any finite dimensional representation $(V,\beta,\rho)$ of $A$ over $k$, every $\varphi_{(\beta,\rho)}$-invariant subspace of $V$ admits a $\varphi_{(\beta,\rho)}$-invariant complement in $V$. Then $\mathrm{char}(k)$ is either $0$ or coprime to $|A|$.
\end{thm}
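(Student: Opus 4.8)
The plan is to prove the contrapositive: assuming $\mathrm{char}(k) = p$ divides $|A|$, I will exhibit a finite dimensional representation of $A$ that has a $\varphi_{(\beta,\rho)}$-invariant subspace admitting no $\varphi_{(\beta,\rho)}$-invariant complement. The natural candidate is the left regular representation $(kA,\beta_\reg,\rho_\reg)$, which is a genuine skew brace representation by Proposition \ref{prop:regular}, so that the hypothesis applies to it. The essential point—and the reason one cannot simply quote the converse of Maschke's theorem for the group $\Lambda_A$—is that $\varphi_{(\beta_\reg,\rho_\reg)}$ has degree $|A|$ rather than $|\Lambda_A| = |A|^2$, so it is \emph{not} the regular representation of $\Lambda_A$. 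The argument must therefore exploit the permutation structure of $\beta_\reg$ and $\rho_\reg$ directly.

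First I would introduce the augmentation map $\varepsilon : kA \longrightarrow k$ sending each basis vector $a\in A$ to $1$, and set $I = \ker(\varepsilon)$. Since both $\beta_\reg(a)$ and $\rho_\reg(a)$ merely permute the basis $A$ (via $b\mapsto a\cdot b$ and $b\mapsto a\circ b$), the map $\varepsilon$ is a homomorphism of $\Lambda_A$-modules onto the trivial module $k$; hence $I$ is $\varphi_{(\beta_\reg,\rho_\reg)}$-invariant and $kA/I\cong k$ is the trivial module. Next consider $\omega = \sum_{a\in A} a$. The same permutation property gives $\beta_\reg(a)\omega = \omega$ and $\rho_\reg(a)\omega = \omega$ for all $a\in A$, so $k\omega$ is a one-dimensional $\varphi_{(\beta_\reg,\rho_\reg)}$-invariant (trivial) subspace. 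Crucially, $\varepsilon(\omega) = |A| = 0$ in $k$, so $\omega\in I$ exactly because $p\mid |A|$.

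The contradiction then comes as follows. Suppose $I$ admitted a $\varphi_{(\beta_\reg,\rho_\reg)}$-invariant complement $W$, necessarily of dimension one. Restricting the quotient map yields an isomorphism $W\cong kA/I$ of $\Lambda_A$-modules, so $W$ is the trivial module and every $\varphi_{(\beta_\reg,\rho_\reg)}(a,b)$ fixes $W$ pointwise; in particular $\beta_\reg(a)v = v$ for all $a\in A$, where $W = kv$. Writing $v = \sum_{b\in A} c_b\, b$ and comparing coefficients in $\beta_\reg(a)v = v$ forces $c_{a^{-1}\cdot b} = c_b$ for all $a,b\in A$, so the coefficients are all equal and $v\in k\omega$. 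But $\omega\in I$, whence $W\cap I\neq\{0\}$, contradicting $kA = I\oplus W$. Thus $I$ has no invariant complement, the left regular representation is not completely reducible, and the hypothesis fails. This proves $p\nmid |A|$.

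I expect the main obstacle to be conceptual rather than computational: recognizing that the converse of Maschke's theorem for the group $\Lambda_A$ cannot be invoked, and locating a bona fide skew brace representation that detects the prime $p$. The left regular representation of $A$ resolves this precisely because its augmentation ideal and the all-ones vector $\omega$ interact through $|A|$ rather than $|\Lambda_A|$, so the obstruction surfaces exactly when $p$ divides $|A|$. The remaining ingredients—equivariance of $\varepsilon$, invariance of $k\omega$, and the coefficient comparison—are routine once the permutation action of $\beta_\reg$ and $\rho_\reg$ is used.
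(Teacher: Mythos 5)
Your proof is correct and is essentially the paper's own argument in dual form: both use the left regular representation $(kA,\beta_\reg,\rho_\reg)$ of the skew brace $A$ (crucially, not the regular representation of $\Lambda_A$) together with the fact that $\omega=\sum_{a\in A}a$ lies in the augmentation ideal precisely when $\mathrm{char}(k)$ divides $|A|$. The only difference is which subspace gets complemented --- the paper complements $\mathrm{span}_k(\omega)$ and shows any $\beta_\reg$-invariant complement must be the augmentation ideal (via summing the $\beta_\reg$-action over $A$), whereas you complement the augmentation ideal and show any invariant complement must be $\mathrm{span}_k(\omega)$ (via the quotient isomorphism onto the trivial module) --- so the representation, the two subspaces, and the obstruction are identical.
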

\begin{proof} Consider the left regular representation $(kA,\beta_\reg,\rho_\reg)$ of $A$. The subspace $\mathrm{span}_k(\delta)$ generated by $\delta = \sum_{a\in A} a$ is clearly $\varphi_{(\beta_\reg,\rho_\reg)}$-invariant, so the hypothesis implies that it admits a $\varphi_{(\beta_\reg,\rho_\reg)}$-invariant complement $U$ in $kA$. But $U$ is in particular $\beta_\reg$-invariant, so by the standard proof of the converse of the usual Maschke's theorem, we know that
\begin{equation}\label{U=}
 U =  \Bigg\{ \sum_{a\in A}c_aa \in kA  : \sum_{a\in A}c_a = 0\Bigg\}.
 \end{equation}
 Indeed, for any $\sum_{a\in A}c_aa\in U$, the $\beta_{\reg}$-invariance of $U$ implies that
\begin{align*}
\sum_{b\in A} \beta_\reg(b) \Bigg(\sum_{a\in A}c_aa \Bigg)
 = \sum_{a\in A} c_a \sum_{b\in A}ba
 = \Bigg(\sum_{a\in A}c_a\Bigg)\delta
 \end{align*}
also lies in $U$. Since $\mathrm{span}_k(\delta)\cap U=\{0\}$, we must have $\sum_{a\in A}c_a= 0$, which gives the left-to-right inclusion of (\ref{U=}), and equality follows as well by comparing dimensions. We deduce that $\mathrm{char}(k)$ is not a prime divisor of $|A|$, for otherwise $\delta\in U$ and $U$ cannot be a complement of $\mathrm{span}_k(\delta)$. 
\end{proof}

Theorem \ref{Maschke'} implies that for any finite skew brace $A$, the hypothesis on $\mathrm{char}(k)$ in Theorem \ref{Maschke thm} cannot be dropped. Let us end with an example of a finite dimensional representation of $A$ that is not completely reducible.

\begin{example}Consider the brace $A=(\mathbb{Z}/p\mathbb{Z}\times\mathbb{Z}/p\mathbb{Z},+,\circ)$, where
\[ \begin{pmatrix}a_1\\ a_2 \end{pmatrix} \circ \begin{pmatrix} b_1\\b_2\end{pmatrix} = \begin{pmatrix} a_1 + b_1 + a_2b_2 \\ a_2 + b_2\end{pmatrix}\]
for all $\left(\begin{smallmatrix}a_1\\ a_2\end{smallmatrix}\right),\left(\begin{smallmatrix} b_1\\ b_2\end{smallmatrix}\right)\in \bZ/p\bZ\times \bZ/p\bZ$, and $p$ is a prime (c.f. \cite[Proposition 2.4]{p^3}). Obviously $A*A= \mathbb{Z}/p\mathbb{Z} \times \{0\}$. Suppose that $\mathrm{char}(k)=p$ and consider the group representation $\overline{\alpha} : A/A*A\longrightarrow \mathrm{GL}_2(k)$ defined by
\[ \overline{\alpha}\left(\begin{pmatrix}0\\1\end{pmatrix} + (A*A)\right) = \overline{\alpha}\left(\begin{pmatrix}0\\1\end{pmatrix} \circ ( A*A)\right) = \begin{pmatrix}1 & 1 \\ 0 & 1 \end{pmatrix}. \]
Then $(k^2,\alpha,\alpha)$ is a representation of $A$ for $\alpha=\overline{\alpha}\circ \pi_A$ by Example \ref{ex:lift}, and it is not completely reducible because $\left(\begin{smallmatrix}1 & 1 \\ 0 & 1 \end{smallmatrix}\right)$ is not diagonalizable.
\end{example}

\section{Module theoretic perspective}\label{sec:repn2}

As in the case of groups, we may reinterpret skew brace representations in the language of modules. Here we use  the vector space $kA$ over $k$ with basis $A$. This has the advantage that we can employ tools from module theory. 

\begin{definition}A \emph{(left) $kA$-module} is a vector space $V$ over $k$ such that
\begin{enumerate}[(1)]
\item $V$ is a left $k(A,\cdot)$-module, where we write $\cdot$ for the action of $(A,\cdot)$;
\item $V$ is a left $k(A,\circ)$-module, where we write $\circ$ for the action of $(A,\circ)$; 
\end{enumerate}
and the two actions satisfy the relation
\begin{equation}\label{relation'}
\lambda_a^{\op}(b)\cdot v = a\circ (b \cdot (\overline{a} \circ v))
\end{equation}
for all $a,b\in A$ and $v\in V$.
\end{definition}

Clearly there is a one-to-one correspondence between representations of $A$ and $kA$-modules. Precisely, a representation $(V,\beta,\rho)$ of $A$ may be made into a $kA$-module by defining the actions to be
\[a\cdot v = \beta(a)(v),\,\ 
a\circ v = \rho(a)(v)\]
for all $a\in A$ and $v\in V$, and vice versa. Note that (\ref{relation'}) is simply a restatement of the relation (\ref{relation}) under this correspondence.

\begin{remark}\label{key remark'} By Remark \ref{key remark}, given a $kA$-module $V$, we can naturally turn it into a $k\Lambda_A$-module by letting $\Lambda_A$ act via
\[ (a,b) \star v = a\cdot (b\circ v) \]
for all $(a,b)\in \Lambda_A$ and $v\in V$. We shall denote by $\Phi(V)$ the vector space $V$ equipped with the $k\Lambda_A$-module structure given by $\star$. In this way, a module over $kA$ can be regarded as a module over the group ring $k\Lambda_A$.
\end{remark}

The following are counterparts of Definitions \ref{def:X} and \ref{def:equivalent}.

\begin{definition}  A $kA$-module $V$ is said to \textit{possess property $X$} if the $k\Lambda_A$-module $\Phi(V)$ possesses property $X$, where $X$ is any property defined for a module over a ring, such as \textit{simple}, \textit{indecomposable}, and \textit{semisimple}.
\end{definition}

\begin{definition} Two $kA$-modules $V_1$ and $V_2$ are said to be \textit{isomorphic} if the $k\Lambda_A$-modules $\Phi(V_1)$ and $\Phi(V_2)$ are isomorphic.
\end{definition}

With this module theoretic perspective, we can prove the following.

\begin{prop}\label{prop:simple} Suppose that $A$ is finite. Then every simple $kA$-module is semisimple as a $k(A,\cdot)$-module.
\end{prop}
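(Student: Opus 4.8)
The plan is to recognize this as an instance of Clifford's theorem, applied to the normal subgroup $(A,\cdot)$ of $\Lambda_A$. First I would set up the group-theoretic dictionary. The map $a\mapsto (a,1)$ embeds $(A,\cdot)$ into $\Lambda_A = (A,\cdot)\rtimes_{\lambda^{\op}}(A,\circ)$ as the kernel of the projection onto the second factor, so this copy of $(A,\cdot)$ is normal in $\Lambda_A$. Under the $\star$-action of Remark \ref{key remark'} one has $(a,1)\star v = a\cdot(1\circ v)=a\cdot v$, so the restriction of the $k\Lambda_A$-module $\Phi(V)$ to this subgroup is exactly the underlying $k(A,\cdot)$-module of $V$. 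Hence ``semisimple as a $k(A,\cdot)$-module'' is literally the assertion that $\Phi(V)|_{(A,\cdot)}$ is semisimple, which is the conclusion of Clifford's theorem.

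Next I would check the one standing hypothesis. By definition $\Phi(V)$ is a simple $k\Lambda_A$-module, and since $A$ is finite the group $\Lambda_A$ is finite and $k\Lambda_A$ is a finite-dimensional $k$-algebra; a simple module over such an algebra is finite-dimensional. Therefore $\Phi(V)|_{(A,\cdot)}$ is a finite-dimensional $k(A,\cdot)$-module and in particular contains a simple submodule $W$, e.g. one of minimal positive dimension.

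The heart of the argument is then the standard three-line Clifford computation. For each $g\in\Lambda_A$ and $n\in(A,\cdot)$ one has $n\star(g\star w)=g\star((g^{-1}ng)\star w)$, and normality gives $g^{-1}ng\in(A,\cdot)$, so the translate $g\star W$ is again $(A,\cdot)$-stable; moreover its $(A,\cdot)$-module structure is that of $W$ twisted by the automorphism $n\mapsto g^{-1}ng$, hence still simple. The sum $\sum_{g\in\Lambda_A} g\star W$ is a nonzero $\Lambda_A$-submodule of $\Phi(V)$, so it equals $\Phi(V)$ by simplicity. Being a sum of simple $k(A,\cdot)$-submodules, $\Phi(V)|_{(A,\cdot)}$ is semisimple, as desired.

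I expect no real obstacle here: the argument uses nothing about $\mathrm{char}(k)$, so unlike Theorem \ref{Maschke thm} no coprimality assumption is needed, and finiteness of $A$ enters only to guarantee that $\Phi(V)$ is finite-dimensional so that a simple $k(A,\cdot)$-submodule exists. The only point requiring a line of care is that translating a simple submodule by a group element preserves simplicity, which rests precisely on the normality of $(A,\cdot)$ in $\Lambda_A$. If one prefers brevity, one may instead simply invoke Clifford's theorem together with the two identifications recorded in the first paragraph.
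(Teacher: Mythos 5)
Your proof is correct and takes essentially the same approach as the paper: both rest on the identifications that a simple $kA$-module is a simple $k\Lambda_A$-module via $\Phi$ and that $(A,\cdot)$ is a normal subgroup of $\Lambda_A$, at which point Clifford's theorem gives semisimplicity of the restriction. The only difference is cosmetic --- the paper cites Clifford's theorem (Webb, Theorem 5.3.1) while you reprove it inline, and your closing remark that one could ``simply invoke Clifford's theorem'' is precisely the paper's proof.
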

\begin{proof} This is a consequence of Clifford's theorem (see \cite[Theorems 5.3.1]{Webb}) because a simple $kA$-module is in particular a simple $k\Lambda_A$-module, and $(A,\cdot)$ is a normal subgroup of $\Lambda_A$.
\end{proof}

In fact, Clifford's theorem admits a natural analog for $kA$-modules. Recall that ideal in skew braces is the natural analog of normal subgroup in groups (they are the substructures that allow one to form quotients).



\begin{thm}[Analog of Clifford's theorem]\label{clifford'} Suppose that $A$ is finite and let $I$ be an ideal of $A$. For any simple $kA$-module $S$, the restriction $\mathrm{Res}_I^A(S)$ of $S$ to $kI$ is always semisimple as a $kI$-module. In the case, we may write
\begin{equation}\label{decomposition}
 \mathrm{Res}_I^A(S) = \bigoplus_{i=1}^{r} S_i^{\oplus{m_i}},
 \end{equation}
where $r\in\mathbb{N}$, and $S_1,\dots,S_r$ are pairwise non-isomorphic simple $k\Lambda_I$-modules occurring with multiplicities $m_1,\dots,m_r$. We shall refer to $S_1^{\oplus m_1},\dots,S_r^{\oplus m_r}$ as the homogeneous components of $\mathrm{Res}_I^A(S)$. Then: 
\begin{enumerate}[$(a)$]
\item The group $\Lambda_A$ permutes the homogeneous components transitively.
\item We have $m_1=\cdots=m_r$ and $\dim_k(S_1) = \cdots = \dim_k(S_r)$.
\end{enumerate}
\end{thm}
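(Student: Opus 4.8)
\section*{Proof proposal}

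The plan is to reduce the entire statement to the classical Clifford theorem (see \cite[Theorem 5.3.1]{Webb}) applied to the simple $k\Lambda_A$-module $\Phi(S)$ together with the subgroup $\Lambda_I = (I,\cdot)\rtimes_{\lambda^{\op}}(I,\circ)$ of $\Lambda_A$. Recall that by definition a simple $kA$-module $S$ is one for which $\Phi(S)$ is a simple $k\Lambda_A$-module, and that $\Lambda_A$ is finite of order $|A|^2$, so the hypotheses of classical Clifford are available. The whole argument hinges on two compatibilities: that $\Lambda_I$ is a \emph{normal} subgroup of $\Lambda_A$, and that restriction from $kA$ to $kI$ is compatible with the functor $\Phi$, in the sense that $\Phi(\mathrm{Res}_I^A(S))$ coincides with the restriction $\mathrm{Res}_{\Lambda_I}^{\Lambda_A}(\Phi(S))$ of the $k\Lambda_A$-module $\Phi(S)$ to $\Lambda_I$.

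For the normality, rather than computing conjugates by hand I would exploit the functoriality of the construction $A\mapsto\Lambda_A$. Any skew brace homomorphism $f\colon A\to B$ respects $\cdot$, $\circ$, and hence also $\lambda^{\op}_a(b)=(a\circ b)\cdot a^{-1}$, so a direct check shows that $(a,b)\mapsto (f(a),f(b))$ is a group homomorphism $\Lambda_A\to\Lambda_B$. Applying this to the quotient map $\pi_{A/I}\colon A\to A/I$, which exists precisely because $I$ is an ideal, produces a group homomorphism $\Lambda_A\to\Lambda_{A/I}$. Since $a\cdot I=a\circ I$, an element is killed exactly when both coordinates lie in $I$, so the kernel is $\{(a,b):a,b\in I\}=\Lambda_I$. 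Hence $\Lambda_I\trianglelefteq\Lambda_A$ with no explicit conjugation computation required. For the compatibility, I would simply note that both $\Phi(\mathrm{Res}_I^A(S))$ and $\mathrm{Res}_{\Lambda_I}^{\Lambda_A}(\Phi(S))$ are the space $S$ on which $(x,y)\in\Lambda_I$ acts by $(x,y)\star v=x\cdot(y\circ v)$ with $x,y\in I$, because restricting the two $A$-actions to $I$ is exactly what defines the $kI$-module $\mathrm{Res}_I^A(S)$; so the two $k\Lambda_I$-modules literally agree.

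With these two facts in hand, classical Clifford applied to the simple $k\Lambda_A$-module $\Phi(S)$ and the normal subgroup $\Lambda_I$ yields that $\mathrm{Res}_{\Lambda_I}^{\Lambda_A}(\Phi(S))$ is semisimple, that it splits into homogeneous components permuted transitively by $\Lambda_A$, and that the multiplicities and the dimensions of the simple constituents are respectively all equal. Translating back through the compatibility and through the identification of simple $kI$-modules with simple $k\Lambda_I$-modules, this gives precisely the semisimplicity of $\mathrm{Res}_I^A(S)$, the decomposition (\ref{decomposition}), and assertions $(a)$ and $(b)$.

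The step I expect to be the main obstacle is the normality of $\Lambda_I$ in $\Lambda_A$; the functorial argument above makes it painless, but a direct verification would force one through the delicate conjugations $(1,b)(x,1)(1,b)^{-1}=(\lambda^{\op}_b(x),1)$ and $(a,1)(1,y)(a,1)^{-1}=\bigl(a\cdot\lambda^{\op}_y(a^{-1}),\,y\bigr)$ for $x,y\in I$. These reduce to showing $\lambda^{\op}_b(I)\subseteq I$ and $a\cdot\lambda^{\op}_y(a^{-1})\in I$, which in turn rely on the (standard but not entirely transparent) facts that $\lambda^{\op}_a=\mathrm{conj}_a\circ\lambda_a$ on $(A,\cdot)$ and that $a*b\in I$ whenever either $a$ or $b$ lies in $I$. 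Isolating the functorial viewpoint is what keeps the proof clean and avoids these computations entirely.
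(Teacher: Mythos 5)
Your proposal is correct, and at the top level it follows the same reduction as the paper: regard $\Phi(S)$ as a simple $k\Lambda_A$-module, check that $\Lambda_I$ is a normal subgroup of $\Lambda_A$, note that restriction is compatible with $\Phi$, and then quote the classical Clifford theorem. Where you genuinely diverge is in the proof of the key lemma, the normality of $\Lambda_I$ in $\Lambda_A$, which is in fact the entire content of the paper's proof. The paper verifies normality by a direct computation: it expands $(a,b)\cdot(x,y)\cdot(a,b)^{-1}$ inside the semidirect product and then checks, working modulo the normal subgroup $(I,\cdot)$ and invoking all of the ideal axioms ($\lambda$-invariance together with normality in both $(A,\cdot)$ and $(A,\circ)$), that the first coordinate of the conjugate lies in $I$. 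You instead establish the functoriality of $A\mapsto\Lambda_A$ --- the one-line identity $f(\lambda^{\op}_a(b))=\lambda^{\op}_{f(a)}(f(b))$ shows that $f\times f$ is a group homomorphism $\Lambda_A\to\Lambda_B$ --- and apply it to the quotient map $A\to A/I$, identifying $\Lambda_I$ as the kernel of $\Lambda_A\to\Lambda_{A/I}$, so normality comes for free. Both arguments are complete: your route quietly uses that $A/I$ is a skew brace and that the projection is a skew brace homomorphism, but this is exactly what the paper's definition of ideal supplies, so nothing is circular. What your approach buys is brevity, a conceptual explanation of the normality (namely $\Lambda_{A/I}\cong\Lambda_A/\Lambda_I$), and a reusable fact of independent interest; what the paper's computation buys is self-containedness and an explicit illustration of how each ideal axiom enters. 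Your explicit observation that $\Phi(\mathrm{Res}_I^A(S))$ and $\mathrm{Res}_{\Lambda_I}^{\Lambda_A}(\Phi(S))$ are literally the same $k\Lambda_I$-module is used only tacitly in the paper, and spelling it out is a small but genuine improvement in rigor.
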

\begin{proof} Below, we shall show that $\Lambda_I = (I,\cdot) \rtimes_{\lambda^{\op}} (I,\circ)$ is a normal subgroup of $\Lambda_A$. The claim would then follow immediately from Remark \ref{key remark'} and the usual Clifford's theorem (see \cite[Theorems 5.3.1 and 5.3.2]{Webb} for example).

\vspace{3mm}

 For any $(a,b)\in \Lambda_A$ and $(x,y) \in \Lambda_I$, we first observe that
 \begin{equation}\label{in I} \lambda_b(x),\, b\lambda_b(x)b^{-1},\, z, \, \overline{a}\circ z\circ a, \, \lambda_a(\overline{a}\circ z\circ a)\in I
 \end{equation}
 because $I$ is an ideal of $A$, where $z=b\circ y \circ \overline{b}$. Now, we have
\begin{align}\notag
(a,b) \cdot (x,y) \cdot (a,b)^{-1} & = 
(a\lambda^{\op}_b(x),b\circ y) \cdot (\lambda^{\op}_{\overline{b}}(a)^{-1},\overline{b})\\\notag
& = (a\lambda_b^{\op}(x)\lambda_{b\circ y\circ\overline{b}}^{\op}(a)^{-1},b\circ y\circ\overline{b})\\\label{conjugate}
& = (a\cdot b\lambda_b(x)b^{-1}\cdot z\lambda_z(a)^{-1}z^{-1}, z)
\end{align}
in the group $\Lambda_A$. By (\ref{in I}), modulo the normal subgroup $(I,\cdot)$, we have
\begin{align*}
a\cdot b\lambda_b(x)b^{-1}\cdot z\lambda_z(a)^{-1}z^{-1}
&\equiv a\lambda_z(a)^{-1}\\
&\equiv a(z\circ a)^{-1}z\\
 &\equiv a(a\circ \overline{a}\circ z \circ a)^{-1}\\
 &\equiv a (a\lambda_a(\overline{a}\circ z\circ a))^{-1}\\
 &\equiv aa^{-1}\\
 &\equiv1
\end{align*}
in the group $(A,\cdot)$. It then follows that (\ref{conjugate}) is element of $\Lambda_I$. Thus, indeed $\Lambda_I$ is a normal subgroup of $\Lambda_A$.
\end{proof}

In Theorem \ref{clifford'}(a), the additive group $(A,\cdot)$ on its own does not permute the homogeneous components transitively in general, and same for the multiplicative group $(A,\circ)$, so it is necessary to use the entire group $\Lambda_A$. Below, let us explain how to construct examples where $(A,\cdot)$ or $(A,\circ)$ does not act transitively on the homogeneous components.

\vspace{3mm}

Suppose that $A$ is almost trivial. Note that $\lambda_a^{\op}=\mathrm{id}_A$ and $\lambda_a$ is conjugation by $a$ for all $a\in A$ in this case. Let $\{\sharp,\sharp'\}=\{\cdot,\circ\}$. Then ideals of $A$ coincide with normal subgroups of $(A,\sharp)$. We start with any simple $k(A,\sharp)$-module $S$ and regard it as a $kA$-module by letting $(A,\sharp')$ act trivially (recall Examples \ref{ex:circ} and \ref{ex:cdot}). The $S_1,\dots,S_r$ in (\ref{decomposition}) are nothing but simple $k(I,\sharp)$-modules, and when $r\geq 2$, the group $(A,\sharp')$ by itself cannot permute the homogeneous components transitively because its action is trivial. 

\vspace{3mm}

In summary, it suffices to give an example of a finite group $G$ and a simple $kG$-module $S$ for which $\mathrm{Res}_N^G(S)$ has at least two homogeneous components (when decomposed as a direct sum of simple $kN$-modules) for some normal subgroup $N$ of $G$. There are plenty of such examples.
 
\begin{example}Let $G =S_n$ be the symmetric group for $n\in \{3,4\}$. Suppose that $ \mathrm{char}(k)\neq 3$ for $n=3$ and $ \mathrm{char}(k)\neq 2$ for $n=4$. Suppose also that $k$ has a primitive cube root $\omega$ of unity. Then for the unique simple $kS_n$-module $S$ of dimension two and $N=A_n$, there are two homogeneous components in the decomposition $\mathrm{Res}_{A_n}^{S_n}(S) = U_1\oplus U_2$, where $U_1$ and $U_2$ are one-dimensional $kA_n$-modules on which  $(1\, 2\, 3)$ acts as $\omega$ and $\omega^2$, respectively.
\end{example}

The next theorem is an analog of a well-known result from modular representation theory (see \cite[Proposition 6.2.1]{Webb} and Theorem 7.1 of version 1 of this paper on \href{https://arxiv.org/abs/2405.08662}{arXiv:2405.08662}). Here we define the \emph{trivial module} over $kA$, denoted by $k_A$, to be the one-dimensional vector space over $k$ on which both $(A,\cdot)$ and $(A,\circ)$ act trivially, in other words, on which $\Lambda_{A}$ acts trivially.

\begin{thm}\label{thm:Brauer'}Suppose that $A$ is finite and that $\mathrm{char}(k)$ is a prime $p$. Then $|A|$ is a power of $p$ if and only if $k_A$ is the only simple $kA$-module.
\end{thm}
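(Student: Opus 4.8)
The plan is to reduce the statement about $kA$-modules to the corresponding well-known statement about modules over the group ring $k\Lambda_A$, using the correspondence $V \mapsto \Phi(V)$ from Remark \ref{key remark'}. Recall that a $kA$-module $V$ is simple precisely when $\Phi(V)$ is a simple $k\Lambda_A$-module, and the trivial module $k_A$ corresponds to the trivial $k\Lambda_A$-module. Since $|\Lambda_A| = |A|^2$, the order $|A|$ is a power of $p$ if and only if $|\Lambda_A|$ is a power of $p$. Therefore the claim is equivalent to the assertion that a finite group $\Lambda_A$ is a $p$-group if and only if the trivial module is the only simple $k\Lambda_A$-module when $\mathrm{char}(k) = p$. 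This is exactly the classical result cited (see \cite[Proposition 6.2.1]{Webb}).

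\medskip

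For the forward direction, I would invoke the standard fact that if $G$ is a finite $p$-group and $\mathrm{char}(k) = p$, then $kG$ is a local ring and the trivial module is the unique simple $kG$-module; applying this with $G = \Lambda_A$ gives that $k_{\Lambda_A}$, and hence $k_A$, is the only simple module. The essential input here is that the augmentation ideal of $k\Lambda_A$ is nilpotent for a $p$-group in characteristic $p$, which forces every simple module to be annihilated by it and thus to be trivial.

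\medskip

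For the converse, suppose $|\Lambda_A| = |A|^2$ is \emph{not} a power of $p$, so some prime $q \neq p$ divides $|\Lambda_A|$. The goal is to produce a nontrivial simple $k\Lambda_A$-module, which then corresponds to a nontrivial simple $kA$-module. One clean way is to take a Sylow $q$-subgroup $Q$ of $\Lambda_A$; since $q$ is coprime to $\mathrm{char}(k)$, the group algebra $kQ$ is semisimple by Maschke's theorem and admits a nontrivial simple module, and inducing up to $k\Lambda_A$ and passing to a simple constituent yields a simple $k\Lambda_A$-module that is not trivial (one checks it is nontrivial by restricting back to $Q$, where the trivial module of $\Lambda_A$ restricts to the trivial module of $Q$).

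\medskip

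\textbf{The main obstacle} I anticipate is verifying nontriviality of the constructed simple module in the converse direction: merely inducing a nontrivial $kQ$-module does not immediately guarantee that every simple constituent is nontrivial, so I must argue carefully that at least one simple constituent fails to be the trivial $\Lambda_A$-module. The cleanest fix is to observe that the trivial $k\Lambda_A$-module restricts to the trivial $kQ$-module, so any simple constituent of the induced module that restricts to contain a nontrivial $kQ$-module cannot be trivial; by Frobenius reciprocity a nontrivial simple $kQ$-module $W$ embeds into the restriction of some simple constituent of $\mathrm{Ind}_Q^{\Lambda_A}(W)$, and that constituent is therefore nontrivial. Alternatively, and perhaps more transparently for the reader, I may simply cite the classical result directly and remark that the equivalence for $\Lambda_A$ is exactly \cite[Proposition 6.2.1]{Webb}, with the translation to $kA$-modules being immediate from Remark \ref{key remark'}.
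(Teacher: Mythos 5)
Your forward direction is the paper's argument verbatim: $|\Lambda_A|=|A|^2$ is a power of $p$, so the trivial module is the only simple $k\Lambda_A$-module by \cite[Proposition 6.2.1]{Webb}, and since every simple $kA$-module $V$ has $\Phi(V)$ simple by definition, $V$ must be $k_A$. The gap is in your converse, at the step where a nontrivial simple $k\Lambda_A$-module ``then corresponds to a nontrivial simple $kA$-module'' (equivalently, in your opening claim that the theorem is \emph{equivalent} to the classical statement for the group $\Lambda_A$). Remark \ref{key remark'} and the paper's definition of simplicity go only one way: a $kA$-module $V$ is \emph{defined} to be simple when $\Phi(V)$ is a simple $k\Lambda_A$-module. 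Nothing stated in the paper says that every simple $k\Lambda_A$-module is of the form $\Phi(V)$ for some $kA$-module $V$; without that, ``$k_A$ is the only simple $kA$-module'' is a priori weaker than ``the trivial module is the only simple $k\Lambda_A$-module,'' and your reduction does not go through. The missing fact is true and short to prove: given a $k\Lambda_A$-module $W$, set $b\cdot v=(b,1)\star v$ and $a\circ v=(1,a)\star v$; relation (\ref{relation'}) holds because $(1,a)(b,1)(1,a)^{-1}=(\lambda_a^{\op}(b),1)$ in $\Lambda_A$, and applying $\Phi$ to the resulting $kA$-module recovers $W$ since $(a,b)=(a,1)(1,b)$. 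But this lemma appears neither in the paper nor in your proposal, so as written there is a hole, and the phrase ``the translation to $kA$-modules being immediate from Remark \ref{key remark'}'' is precisely what is not immediate.

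The paper's converse is arranged to avoid this issue entirely, and it is simpler. If $|A|$ is not a power of $p$, then the group $(A,\circ)$, of order $|A|$, admits a nontrivial simple $k(A,\circ)$-module $S$; letting $(A,\cdot)$ act trivially (Example \ref{ex:circ}) makes $S$ a $kA$-module, and it is simple because $(a,b)\star v=\rho(b)v$, so the $k\Lambda_A$-submodules of $\Phi(S)$ are exactly the $k(A,\circ)$-submodules of $S$. This stays on the side of the correspondence that the paper actually established. Your Sylow-plus-Frobenius-reciprocity argument is sound group theory (provided you take a simple \emph{quotient} of the induced module so that the adjunction applies), and it essentially reproves the classical existence statement that the paper instead cites --- note that \cite[Proposition 6.2.1]{Webb} is invoked there only for the $p$-group direction, not as the full equivalence --- but it manufactures the module over the wrong object, namely $\Lambda_A$ rather than $(A,\circ)$. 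To repair your proof, either insert the two-line surjectivity lemma above, or replace your construction by the paper's direct one.
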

\begin{proof} If $|A|$ is a power of $p$, then so is $|\Lambda_A| = |A|^2$ and it is known that $k_A$ is the only simple $k\Lambda_A$-module in this case (see \cite[Proposition 6.2.1]{Webb}). Thus, in particular $k_A$ is the only simple $kA$-module.

\vspace{3mm}

If $|A|$ is not a power of $p$, then we know that there exists a non-trivial simple $k(A,\circ)$-module $S$ (see Theorem 7.1 of \href{https://arxiv.org/abs/2405.08662v1}{arXiv:2405.08662v1}). We regard $S$ as a $kA$-module by letting $(A,\cdot)$ act trivially (recall Example \ref{ex:circ}). Then, clearly $S$ is non-trivial and simple as a $kA$-module as well.
\end{proof}

In Theorem \ref{thm:Brauer'}, in the direction where we assume that $|A|$ is not a power of $p$, it is possible to exhibit simple $kA$-modules for which both the actions of $(A,\cdot)$ and $(A,\circ)$ are non-trivial when the group
\[ A/A*A = (A/A*A,\cdot) = (A/A*A,\circ)\]
admits a non-trivial one-dimensional $k(A/A*A)$-module $S$, for example. To be precise, let $S\otimes_k S$ be the $kA$-module as constructed in Example \ref{ex:lift'}. For any $a\in A$, let $\ell_a\in k^\times$ be such that $a\cdot (A*A) = a\circ (A*A)$ acts on $S$ via scalar multiplication by $\ell_a$. By the definition in Example \ref{ex:lift'}, the actions of $(A,\cdot)$ and $(A,\circ)$ on $S\otimes_kS$ are given by
\begin{align*}
a\cdot (s\otimes t) & = (\ell_as)\otimes t = \ell_a(s\otimes t)\\
a\circ (s\otimes t) & = s \otimes (\ell_at) = \ell_a(s\otimes t)
\end{align*}
for all $a\in A$ and $s,t\in S$. Since $S$ is a non-trivial $k(A/A*A)$-module, there exists $a\in A$ such that $\ell_a\neq 1$. This implies that both $(A,\cdot)$ and $(A,\circ)$ act non-trivially on $S\otimes_kS$. Here $S\otimes_kS$ is clearly a simple $kA$-module because it is one-dimensional.

\begin{example}Consider the brace $A = (\bZ/q\bZ\times \bZ/q'\bZ,+,\circ)$, where
\[ \begin{pmatrix}a_1\\a_2\end{pmatrix} \circ \begin{pmatrix} b_1\\ b_2\end{pmatrix} = 
\begin{pmatrix}
a_1 + \lambda^{a_2}b_1\\ a_2+b_2\end{pmatrix}\]
for all $\left(\begin{smallmatrix}a_1\\a_2\end{smallmatrix}\right),\left(\begin{smallmatrix}b_1\\b_2\end{smallmatrix}\right)\in \bZ/q\bZ\times \bZ/q'\bZ$, $q,q'$ are primes for which $q\equiv1\pmod{q'}$, and $\lambda \in \mathbb{Z}/q\mathbb{Z}$ is an element of multiplicative order $q'$ (c.f. \cite[Theorem]{pq}). It is straightforward to see that
\[ A*A = \mathbb{Z}/q\mathbb{Z}\times\{0\}\mbox{ and }A/A*A\simeq \mathbb{Z}/q'\mathbb{Z}.\]
Thus, when $k$ is a splitting field of $A/A*A$ of characteristic coprime to $q'$, for example, there is a non-trivial one-dimensional $k(A/A*A)$-module, and we can use the above method to construct a simple $kA$-module for which the actions of both $(A,\cdot)$ and $(A,\circ)$ are non-trivial.
\end{example}

\section{Some concrete examples}\label{sec:ex}

A representation $(V,\beta,\rho)$ of $A$ is clearly irreducible if either $(V,\beta)$ or $(V,\rho)$ is irreducible as a group representation, and similarly for indecomposable. It is possible, as Examples \ref{ex:Z/2xZ/2} and \ref{ex2} below show, respectively, that
\begin{enumerate}[(1)]
    \item both $(V,\beta)$ and $(V,\rho)$ are reducible, yet $(V,\beta,\rho)$ is irreducible;
    \item both $(V,\beta)$ and $(V,\rho)$ are decomposable, yet $(V,\beta,\rho)$ is indecomposable though reducible.
\end{enumerate}
Irreducible and indecomposable representations of skew braces are therefore much more difficult to understand than those of groups, even when one has a complete classification of the irreducible representations of both the additive and multiplicative groups.

\begin{example}\label{ex:Z/2xZ/2}
Consider the brace $(\bZ/2\bZ\times \bZ/2\bZ,+,\circ)$, where 
\[ \begin{pmatrix}a_1\\ a_2 \end{pmatrix} \circ \begin{pmatrix} b_1\\b_2\end{pmatrix} = \begin{pmatrix} a_1 + b_1 + a_2b_2 \\ a_2 + b_2\end{pmatrix}\]
for all $\left(\begin{smallmatrix}a_1\\ a_2\end{smallmatrix}\right),\left(\begin{smallmatrix} b_1\\ b_2\end{smallmatrix}\right)\in \bZ/2\bZ\times \bZ/2\bZ$ (c.f. \cite[Proposition 2.4]{p^3}), and note that 
\begin{align*}
 (\bZ/2\bZ\times \bZ/2\bZ,\circ)
 & = \left\langle \begin{pmatrix} 1\\ 1 \end{pmatrix}\right\rangle
  = \left\{ \begin{pmatrix} 0\\ 0 \end{pmatrix},\begin{pmatrix} 1\\ 1 \end{pmatrix},\begin{pmatrix} 1\\ 0 \end{pmatrix},\begin{pmatrix} 0\\ 1 \end{pmatrix}\right\}
\end{align*}
is cyclic. Let $\beta : (\bZ/2\bZ\times \bZ/2\bZ,+) \longrightarrow \GL_2(k)$ be defined by
\[\beta\left(\begin{matrix}1\\0\end{matrix}\right) = \begin{pmatrix} -1& 0 \\ 0 & -1\end{pmatrix}, \,\ \beta\left( \begin{matrix}0\\1\end{matrix}\right) = \begin{pmatrix}0&1\\1 & 0 \end{pmatrix},\]
and let $\rho: (\bZ/2\bZ\times \bZ/2\bZ,\circ)\longrightarrow \GL_2(k)$ be defined by
\[ \rho\left(\begin{matrix}1\\1\end{matrix}\right) = \begin{pmatrix} 0 & -1 \\ 1 & 0 \end{pmatrix}.\]
To verify the relation (\ref{relation}), note that it may be written as
\[ 
\beta\begin{pmatrix}a_2b_2 \\ 0\end{pmatrix} = \rho\left( \begin{matrix}a_1\\a_2\end{matrix}\right) \beta\left( \begin{matrix}b_1\\b_2\end{matrix}\right) \rho\left(\begin{matrix}a_1\\a_2\end{matrix} \right)^{-1}\beta\begin{pmatrix}b_1 \\ b_2\end{pmatrix}
^{-1}.\]
For $a_2 = 0$ or $b_2=0$, this is trivial since $\beta\left(\begin{smallmatrix}*\\0\end{smallmatrix}\right),\rho\left(\begin{smallmatrix}*\\0\end{smallmatrix}\right) = \pm \left(\begin{smallmatrix}1&0  \\ 0 & 1\end{smallmatrix}\right)$. 
For $a_2=1$ and $b_2 =1$, this holds because $\beta\left(\begin{smallmatrix} * \\ 1 \end{smallmatrix}\right) = \pm \left(\begin{smallmatrix} 0 & 1 \\ 1 & 0 \end{smallmatrix}\right),\, 
 \rho\left(\begin{smallmatrix}  * \\ 1 \end{smallmatrix}\right) = \pm \left(\begin{smallmatrix}  0 & -1\\ 1 & 0 \end{smallmatrix}\right)$ and
\[ \beta\begin{pmatrix} 1\\0\end{pmatrix} 
= \begin{pmatrix} 0 & - 1\\ 1 &0 \end{pmatrix} \begin{pmatrix}0 & 1 \\ 1 & 0 \end{pmatrix}
\begin{pmatrix} 0 & - 1\\ 1 & 0 \end{pmatrix}^{-1}\begin{pmatrix}0 & 1 \\ 1 & 0 \end{pmatrix}^{-1}.\] 
Thus $(k^2, \beta,\rho)$ is a representation of the brace under consideration.
 
 \vspace{3mm}
 
Suppose that $\mathrm{char}(k)\neq 2$ and that $-1$ has a square root $i$ in $k$. Then, by computing eigenspaces of the generators of $\mathrm{Im}(\beta)$ and $\mathrm{Im}(\rho)$, we see that the one-dimensional $\beta$-invariant subspaces are exactly
\[\mathrm{span}_{k} \begin{pmatrix}1\\1\end{pmatrix},\,\ \mathrm{span}_{k}\begin{pmatrix}-1\\1\end{pmatrix},\]
and the one-dimensional $\rho$-invariant subspaces are exactly
\[\mathrm{span}_{k} \begin{pmatrix}i\\1\end{pmatrix},\,\ \mathrm{span}_{k}\begin{pmatrix}-i\\1\end{pmatrix}.\]
Thus, the group representations $(k^2,\beta)$ and $(k^2,\rho)$ are both reducible (in fact completely reducible because $\left(\begin{smallmatrix}1\\1 \end{smallmatrix}\right),\left(\begin{smallmatrix}-1\\1 \end{smallmatrix}\right)$ and $\left(\begin{smallmatrix}i\\1 \end{smallmatrix}\right),\left(\begin{smallmatrix}-i\\1 \end{smallmatrix}\right)$ are bases of $k^2$). Since the four one-dimensional subspaces above are pairwise distinct, there are no one-dimensional $\varphi_{(\beta,\rho)}$-invariant subspaces, whence the brace representation $(k^2,\beta,\rho)$ is irreducible.
\end{example}

\begin{example}\label{ex2}Let $(\bZ/p\bZ\times \bZ/2\bZ,+,\circ)$ be the same brace in Example \ref{ex:beta}, where $p$ is an odd prime. Note that the multiplicative group is a semidirect product
\begin{align*}
(\bZ/p\bZ\times \bZ/2\bZ,\circ) 
&
= \left\{\begin{pmatrix}0\\0\end{pmatrix},
\begin{pmatrix}1\\0\end{pmatrix},\dots,
\begin{pmatrix}p-1\\0\end{pmatrix}
 \right\}\rtimes\left\{\begin{pmatrix}0\\0\end{pmatrix},\begin{pmatrix}0\\1\end{pmatrix}\right\},
\end{align*}
where $\left(\begin{smallmatrix}0\\1\end{smallmatrix}\right) \circ \left(\begin{smallmatrix}x\\0\end{smallmatrix}\right)\circ \overline{\left(\begin{smallmatrix}0\\1\end{smallmatrix}\right)}=-\left(\begin{smallmatrix}x\\0\end{smallmatrix}\right)$ for all $x\in \mathbb{Z}/p\mathbb{Z}$.
Suppose that $\mathrm{char}(k) = p$. Let $\beta : (\bZ/p\bZ\times \bZ/2\bZ,+) \longrightarrow \GL_3(k)$ be defined by
\[\beta\begin{pmatrix}1\\0\end{pmatrix} = \begin{pmatrix}1&0&0\\0&1&1\\0 & 0 & 1\end{pmatrix},\,\ \beta\begin{pmatrix}0\\1\end{pmatrix} =\begin{pmatrix} 1 & 0 & 0 \\ 0 & 1 & 0 \\ 0 & 0 & 1\end{pmatrix},\]
and let $\rho: (\bZ/p\bZ\times \bZ/2\bZ,\circ)\longrightarrow \GL_3(k)$ be defined by
\[ \rho\begin{pmatrix} 1 \\ 0 \end{pmatrix}=\begin{pmatrix}1 & 0 & 0 \\ 1 & 1 & 0 \\ 0 & 0 & 1 \end{pmatrix},\,\ \rho\begin{pmatrix} 0 \\ 1 \end{pmatrix}   = \begin{pmatrix} 1 & 0 & 0 \\ 0 & -1&0 \\ 0 & 0 & 1\end{pmatrix}.\]
To verify the relation (\ref{relation}), note that it may be written as
\[ \beta\begin{pmatrix} (-1)^{a_2}b_1\\b_2\end{pmatrix}
= \rho\begin{pmatrix}a_1\\a_2\end{pmatrix}\beta\begin{pmatrix}b_1\\b_2\end{pmatrix}\rho\begin{pmatrix}a_1\\a_2\end{pmatrix}^{-1}.\]
We may put $b_2=0$ because $\beta\left(\begin{smallmatrix}0\\1\end{smallmatrix}\right)$ is the identity matrix. Since
\[ \begin{pmatrix}a_1\\a_2\end{pmatrix}
 = \begin{pmatrix} 0\\a_2 \end{pmatrix}\circ
 \begin{pmatrix} (-1)^{a_2}a_1\\0\end{pmatrix}\]
and $\rho\left(\begin{smallmatrix}*\\0\end{smallmatrix}\right)$ commutes with elements of $\mathrm{Im}(\beta)$, the value of $a_1$ does not matter so we may put $a_1=0$. We also see that the relation is trivial for $a_2=0$. 
For $b_2=0,\, a_1=0$, and $a_2=1$, in matrix form the relation becomes
\begin{align*}
\begin{pmatrix}
1 & 0 & 0\\
0 & 1 & -b_1\\
0 & 0 & 1
\end{pmatrix}
= \begin{pmatrix}
1 & 0 & 0 \\ 0 & -1 & 0 \\ 0 & 0 & 1
\end{pmatrix}
\begin{pmatrix}
1 & 0 & 0 \\
0 & 1 & b_1\\
0 & 0 & 1
\end{pmatrix}
 \begin{pmatrix}
1 & 0 & 0 \\ 0 & -1 & 0 \\ 0 & 0 & 1
\end{pmatrix}^{-1},
\end{align*}
which is easily seen to be true. Thus, the triplet $(k^3,\beta,\rho)$ is a representation

\noindent of the brace under consideration.

\vspace{3mm}

To simplify notation, let $e_1,\, e_2,\, e_3$ denote the standard basis vectors of $k^3$.
The group representation $(k^3,\beta)$ is decomposable because
\[ k^3 = \mathrm{span}_k(e_1) \oplus \mathrm{span}_k(e_2,e_3)\]
and the two direct summands are $\beta$-invariant. Similarly, the group representation $(k^3,\rho)$ is also decomposable because
\[ k^3 = \mathrm{span}_k(e_1,e_2) \oplus \mathrm{span}_k(e_3)\]
and the two direct summands are $\rho$-invariant. A computation of eigenspaces yields that $\mathrm{span}_k(e_2)$ is the only one-dimensional $\varphi_{(\beta,\rho)}$-invariant subspace of $k^3$. Thus, if $(k^3,\beta,\rho)$ were decomposable, then necessarily
\[ k^3 = \mathrm{span}_k(e_2) \oplus U \]
for some $\varphi_{(\beta,\rho)}$-invariant subspace $U$. But for any $u\in U$, both of 
\[\begin{cases}
 \beta\left( \begin{smallmatrix}1\\0\end{smallmatrix}\right)(u) - u = u_3e_2, \\[4pt]
\rho\left( \begin{smallmatrix}1\\0\end{smallmatrix}\right)(u) -u =u_1e_2, 
\end{cases}\mbox{ where }u = \begin{pmatrix}u_1\\u_2\\u_3\end{pmatrix},\]
must lie in $U$ because $U$ is $\varphi_{(\beta,\rho)}$-invariant. Since $\mathrm{span}_k(e_2)\cap U=\{0\}$, we get that $u_1=u_3=0$ and so $U\subseteq\mathrm{span}_k(e_2)$, which is a contradiction. This shows that $(V,\beta,\rho)$ is indecomposable, though it is reducible since $\mathrm{span}_k(e_2)$ is a $\varphi_{(\beta,\rho)}$-invariant subspace.
\end{example}

In the case that $A$ is finite,  for any irreducible representation $(V,\beta,\rho)$ of $A$, we know from Proposition \ref{prop:simple} that $(V,\beta)$ must be completely reducible. The next example shows that $(V,\rho)$ need not be completely reducible in general.

\begin{example}\label{ex:counterexample}Consider the skew brace $(S_3,\cdot,\circ)$, where $\circ$ denotes the operation arising from the exact factorization $S_3= \langle(1\, 2)\rangle A_3$, namely
\[ \sigma\circ \tau = \sigma_1\tau\sigma_2\]
for all $\sigma,\tau\in S_3$ with $\sigma = \sigma_1\sigma_2$ for $\sigma_1\in \langle(1\, 2)\rangle$ and $\sigma_2\in A_3$ (c.f. \cite[Example 1.6]{GV}). Note that the multiplicative group is a direct product 
\[ (S_3,\circ) = \langle (1\, 2\,3)\rangle \times \langle (1\, 2)\rangle = A_3 \times \{(1),(1\,2)\}.\]
Suppose that $\mathrm{char}(k) = 2$. Let $\beta : (S_3,\cdot)\longrightarrow\mathrm{GL}_2(k)$ be defined by
\[ \beta(1\, 2\, 3)= \begin{pmatrix} 0 & 1 \\ 1 & 1\end{pmatrix},\,\ 
\beta(1\, 2) = \begin{pmatrix} 1 & 1 \\ 0 & 1 \end{pmatrix},\]
and let $\rho: (S_3,\circ)\longrightarrow\mathrm{GL}_2(k)$ be defined by
\[ \rho(1\, 2\, 3)= \begin{pmatrix} 1 & 0 \\ 0 & 1\end{pmatrix},\,\ 
\rho(1\, 2) = \begin{pmatrix} 1 & 1 \\ 0 & 1 \end{pmatrix}.\]
To verify the relation (\ref{relation}), note that it may be written as
\[ \beta(\sigma_1)\beta(\tau)\beta(\sigma_1)^{-1} = \rho(\sigma) \beta(\tau)\rho(\sigma)^{-1},\]
where $\sigma = \sigma_1\sigma_2$ with $\sigma_1 \in \langle (1\, 2)\rangle$ and $\sigma_2\in A_3$. Since $\sigma_1\sigma_2 = \sigma_1\circ \sigma_2$ and $\rho$ is trivial on $A_3$, it is enough to consider $\sigma = \sigma_1$. 
But then the relation becomes trivial because $\beta(\sigma_1) = \rho(\sigma_1)$. Thus, the triplet $(k^2,\beta,\rho)$ is a representation of the skew brace under consideration. 

\vspace{3mm}

Clearly the group representation $(k^2,\beta)$ is irreducible, so in particular the skew brace representation $(k^2,\beta,\rho)$ is also irreducible, but $(k^2,\rho)$ is not completely reducible (it is reducible and indecomposable).\end{example}

In the case that $A$ is trivial, one can identity a representation $(V,\alpha)$ of the group $(A,\cdot)$ with the representation $(V,\alpha,\alpha)$ of the skew brace $A$, and this identification is equivalence preserving. For any $f\in \GL(V)$, clearly
\[ \alpha_f : (A,\cdot) \longrightarrow \GL(V);\,\ \alpha_f(a) = f\alpha(a)  f^{-1} \]
is a representation of $(A,\cdot)$ that is equivalent to $(V,\alpha)$. Note that $(V,\alpha,\alpha_f)$ is a representation of $A$ if and only if
\[ \alpha(a)\alpha(b)\alpha(a)^{-1} =\alpha_f(a) \alpha(b) \alpha_f(a)^{-1}\]
for all $a,b\in A$, or equivalently by replacing $a$ with $a^{-1}$, if the commutator 
\[ \alpha_f(a)\alpha(a)^{-1} = f\alpha(a) f^{-1} \alpha(a)^{-1}\]
centralizes $\mathrm{Im}(\alpha)$ for all $a\in A$. But $(V,\alpha,\alpha)$ and $(V,\alpha,\alpha_f)$ are equivalent if and only if $\alpha_f = \alpha$. Thus, if we can find $f\in \GL(V)$ such that
\begin{enumerate}[(1)]
\item $f\alpha(a) f^{-1} \alpha(a)^{-1}$ centralizes $\mathrm{Im}(\alpha)$ for all $a\in A$;
\item $f$ does not centralize $\mathrm{Im}(\alpha)$;
\end{enumerate}
then even though $(V,\alpha)$ and $(V,\alpha_f)$ are equivalent as representations of the group $(A,\cdot)$, the triplets $(V,\alpha,\alpha)$ and $(V,\alpha,\alpha_f)$ are not equivalent as representations of the skew brace $A$. We shall give a concrete example below. It follows that the equivalence of representations of skew braces is much more difficult to classify than that of representations of groups. 

\begin{example}Consider the symmetric group $S_3 = (S_3,\cdot)$ on three letters. Let $\alpha: S_3\longrightarrow \GL_2(k)$ be the (irreducible) representation defined by
\[ \alpha(1\, 2\, 3) = \begin{pmatrix} 0 & -1\\ 1 & -1\end{pmatrix},\,\ \alpha(1\, 2) = \begin{pmatrix} - 1 & 1 \\ 0 & 1 \end{pmatrix}.\]
Suppose that $\mathrm{char}(k)\neq 2,3$, and take $f\in \GL_2(k)$ to be the matrix
\[ f = \begin{pmatrix} -1 & 2 \\ -2 & 1 \end{pmatrix},\]
which is invertible since $\mathrm{char}(k)\neq3$. Then $f$ satisfies (1) above because 
\begin{align*}
 f\alpha(1\,2\,3)f^{-1} \alpha(1\,2\,3)^{-1} & = \begin{pmatrix}1 & 0 \\ 0 & 1\end{pmatrix},\\
 f\alpha(1\,2) f^{-1} \alpha(1\,2)^{-1} & = \begin{pmatrix}-1 & 0 \\ 0 & -1\end{pmatrix},
 \end{align*}
and $f$ does not commute with $\alpha(1\,2)$ since $\mathrm{char}(k)\neq2$, so it satisfies (2).
\end{example}

\section*{Acknowledgments} 

This project started from discussions at the symposium (open) ``Research on finite groups, algebraic combinatorics, and vertex algebras" held in December 2023 at the Research Institute for Mathematical Sciences, an International Joint Usage/Research Center located in Kyoto University.

\vspace{3mm}

This work is supported by JSPS KAKENHI Grant Number 24K16891.

\vspace{3mm}

The authors would like to thank Kyoichi Suzuki for useful discussions on modular representation theory of finite groups, as well as Thomas Letourmy and Leandro Vendramin for pointing out Remark \ref{key remark}.

\end{document}